\DeclareFontFamily{U}{matha}{\hyphenchar\font45}
\DeclareFontShape{U}{matha}{m}{n}{
	<5> <6> <7> <8> <9> <10> gen * matha
	<10.95> matha10 <12> <14.4> <17.28> <20.74> <24.88> matha12
}{}
\DeclareSymbolFont{matha}{U}{matha}{m}{n}
\DeclareFontFamily{U}{mathx}{\hyphenchar\font45}
\DeclareFontShape{U}{mathx}{m}{n}{
	<5> <6> <7> <8> <9> <10>
	<10.95> <12> <14.4> <17.28> <20.74> <24.88>
	mathx10
}{}
\DeclareSymbolFont{mathx}{U}{mathx}{m}{n}
\DeclareMathDelimiter{\vvvert}{0}{matha}{"7E}{mathx}{"17}
\newtheorem{theorem}{Theorem}[section]
\newtheorem{lemma}{Lemma}[section]
\newtheorem{remark}{Remark}[section]
\newtheorem{corollary}{Corollary}[section]
\numberwithin{equation}{section}
\begin{document}

\title{Improvement and generalization of some Jensen-Mercer-type inequalities}
\author{Hamid Reza Moradi and Shigeru Furuichi}
\subjclass[2010]{Primary 26A51, Secondary 26D15, 26B25}
\keywords{Jensen-Mercer inequality, Jensen inequality, convex function.} \maketitle

\begin{abstract}
The present paper is devoted to the study of Jensen-Mercer-type inequalities. Our results generalize and improve some earlier results in the literature.
\end{abstract}

%------------------------------------------------------------------------------------%
\pagestyle{myheadings}
\markboth{\centerline {Improvement and generalization of some Jensen-Mercer-type inequalities}}
{\centerline {H.R. Moradi \& S. Furuichi}}
\bigskip
\bigskip
%------------------------------------------------------------------------------------%
%------------------------------------------------------------------------------------%
\section{Introduction}
The well-known Jensen inequality for the convex functions states that if $f$ is a convex function on the interval $\left[ m,M \right]$, then
\begin{equation}\label{3}
f\left( \sum\limits_{i=1}^{n}{{{w}_{i}}{{a}_{i}}} \right)\le \sum\limits_{i=1}^{n}{{{w}_{i}}f\left( {{a}_{i}} \right)}
\end{equation}
for all ${{a}_{i}}\in \left[ m,M \right]$ and ${{w}_{i}}\in \left[ 0,1 \right]$ $(i=1,\ldots ,n)$ with $\sum\limits_{i=1}^{n}{{{w}_{i}}}=1$. Various inequalities improving and extending \eqref{3} have been studied in \cite{3,5,1}.

Mercer \cite{4} proved that if $f$ is a convex function on $\left[ m,M \right]$, then
\begin{equation}\label{9}
f\left( M+m-\sum\limits_{i=1}^{n}{{{w}_{i}}{{a}_{i}}} \right)\le f\left( M \right)+f\left( m \right)-\sum\limits_{i=1}^{n}{{{w}_{i}}f\left( {{a}_{i}} \right)}
\end{equation}
for all ${{a}_{i}}\in \left[ m,M \right]$ and ${{w}_{i}}\in \left[ 0,1 \right]$ $(i=1,\ldots ,n)$ with $\sum\limits_{i=1}^{n}{{{w}_{i}}}=1$. Several refinements and generalizations of the inequality \eqref{9} have been
given in \cite{6,2}. 

In \cite[Theorem 2.1]{kian} it has been shown that if $f$ is a convex function on the interval $\left[ m,M \right]$, then
\begin{equation}\label{17}
f\left( M+m-\frac{a+b}{2} \right)\le \frac{1}{b-a}\int_{a}^{b}{f\left( M+m-u \right)du}\le f(M)+f(m)-\frac{f\left( a \right)+f\left( b \right)}{2}
\end{equation}
for all $a,b\in \left[ m,M \right]$. 

In this paper we prove the following general result: 
\begin{equation}\label{13}
f\left( M+m-\overline{a} \right)\le \sum\limits_{i=1}^{n}{\frac{{{w}_{i}}}{\overline{a}-{{a}_{i}}}\int_{M+m-\overline{a}}^{M+m-{{a}_{i}}}{f\left( t \right)dt}}\le f\left( M \right)+f\left( m \right)-\sum\limits_{i=1}^{n}{{{w}_{i}}f\left( {{a}_{i}} \right)}
\end{equation}
where $\bar{a} :=\sum\limits_{i=1}^{n}{{{w}_{i}}{{a}_{i}}} $.  After that,  we show a refinement of inequality \eqref{13} in the following form
\begin{equation*}
\begin{aligned}
f\left( M+m-\overline{a} \right)&\le \sum\limits_{i=1}^{n}{{{w}_{i}}f\left( M+m-\frac{\overline{a}+{{a}_{i}}}{2} \right)} \\ 
& \le \sum\limits_{i=1}^{n}{\frac{{{w}_{i}}}{\overline{a}-{{a}_{i}}}\int_{M+m-\overline{a}}^{M+m-{{a}_{i}}}{f\left( t \right)dt}} \\ 
& \le f\left( M \right)+f\left( m \right)-\sum\limits_{i=1}^{n}{{{w}_{i}}f\left( {{a}_{i}} \right)}.  
\end{aligned}
\end{equation*}
Though we confine our discussion to scalars, by changing the convex function assumption with the operator convex, the inequalities we obtain in this paper can be extended in a natural way to Hilbert space operators.
\section{Main Results}
The following lemma is well-known in \cite[Lemma 1.3]{4}, but we prove it for the reader convenience.
\begin{lemma}\label{1}
Let $f$ be a convex function on $\left[ m,M \right]$, then
\[f\left( M+m-{{a}_{i}} \right)\le f\left( M \right)+f\left( m \right)-f\left( {{a}_{i}} \right),\text{ }\left( m\le {{a}_{i}}\le M,~i=1,\ldots ,n \right).\]
\end{lemma}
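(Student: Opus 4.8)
The plan is to reduce the claim to a direct application of convexity by writing both $a_i$ and its ``reflection'' $M+m-a_i$ as convex combinations of the two endpoints $m$ and $M$. Since $a_i\in\left[m,M\right]$, I would introduce the parameter
\[
\lambda :=\frac{{{a}_{i}}-m}{M-m}\in\left[0,1\right],
\]
so that ${{a}_{i}}=\lambda M+\left(1-\lambda\right)m$. The key observation is that the reflected point then has the complementary representation $M+m-{{a}_{i}}=\left(1-\lambda\right)M+\lambda m$, which is again a convex combination of $M$ and $m$ with the weights simply swapped.

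With these two representations in hand, I would apply the convexity of $f$ to each point separately, obtaining
\[
f\left({{a}_{i}}\right)\le\lambda f\left(M\right)+\left(1-\lambda\right)f\left(m\right)
\]
and
\[
f\left(M+m-{{a}_{i}}\right)\le\left(1-\lambda\right)f\left(M\right)+\lambda f\left(m\right).
\]
Adding these two inequalities, the coefficients of $f\left(M\right)$ combine to $\lambda+\left(1-\lambda\right)=1$, and likewise for $f\left(m\right)$, so the right-hand sides collapse to exactly $f\left(M\right)+f\left(m\right)$. Rearranging the resulting inequality $f\left({{a}_{i}}\right)+f\left(M+m-{{a}_{i}}\right)\le f\left(M\right)+f\left(m\right)$ isolates the desired bound.

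There is essentially no serious obstacle here: the entire content lies in spotting the complementary convex-combination structure, after which the conclusion follows by summing two instances of the defining inequality for convex functions. The only point requiring mild care is the degenerate case $m=M$, where the parameter $\lambda$ is undefined; this is handled trivially, since then ${{a}_{i}}=m=M$ and the claimed inequality reduces to an equality. I would present the main argument for $m<M$ and remark on the trivial boundary case if completeness demands it.
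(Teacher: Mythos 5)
Your proof is correct and follows essentially the same route as the paper: your weights $\lambda=\frac{a_i-m}{M-m}$ and $1-\lambda$ are exactly the paper's coefficients $\frac{a_i-m}{M-m}$ and $\frac{M-a_i}{M-m}$, and the paper likewise applies convexity at $a_i$ and at the reflected point $M+m-a_i$ and adds the two inequalities. Your remark on the degenerate case $m=M$ is a small point of extra care the paper omits, but otherwise the arguments coincide.
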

\begin{proof}
If $f:\left[ m,M \right]\to \mathbb{R}$ is a convex function, then for any $x,y\in \left[ m,M \right]$ and $t\in \left[ 0,1 \right]$, we have	
\begin{equation}\label{7}
f\left( tx+\left( 1-t \right)y \right)\le tf\left( x \right)+\left( 1-t \right)f\left( y \right).
\end{equation}
It can be verified that if $m\le {{a}_{i}}\le M\left( i=1,\ldots ,n \right)$, then $\frac{M-{{a}_{i}}}{M-m},\frac{{{a}_{i}}-m}{M-m}\le 1$ and $\frac{M-{{a}_{i}}}{M-m}+\frac{{{a}_{i}}-m}{M-m}=1$.  Thanks to \eqref{7}, we have  
\begin{equation}\label{8}
f\left( {{a}_{i}} \right)\le \frac{M-{{a}_{i}}}{M-m}f\left( m \right)+\frac{{{a}_{i}}-m}{M-m}f\left( M \right).
\end{equation}
One the other hand, $m\le {{a}_{i}}\le M\left( i=1,\ldots ,n \right)$ implies $m\le M+m-{{a}_{i}}\le M\left( i=1,\ldots ,n \right)$. Thus, from \eqref{8} we infer
\begin{equation}\label{10}
f\left( M+m-{{a}_{i}} \right)\le \frac{{{a}_{i}}-m}{M-m}f\left( m \right)+\frac{M-{{a}_{i}}}{M-m}f\left( M \right).
\end{equation}
Summing up \eqref{8} and \eqref{10}, we get the desired result.
\end{proof}

Based on this, our first result can be stated as follows:
\begin{theorem}\label{6}
Let $f$ be a convex function on $\left[ m,M \right]$ and $t\in [0,1]$. Then
\begin{equation}\label{5}
\begin{aligned}
 f\left( M+m-\overline{a} \right)&\le \sum\limits_{i=1}^{n}{{{w}_{i}}f\left( M+m-\left( \left( 1-t \right)\overline{a}+t{{a}_{i}} \right) \right)} \\ 
& \le f\left( M \right)+f\left( m \right)-\sum\limits_{i=1}^{n}{{{w}_{i}}f\left( {{a}_{i}} \right)}  
\end{aligned}
\end{equation}
	for all ${{a}_{i}}\in \left[ m,M \right]$ and ${{w}_{i}}\in \left[ 0,1 \right]$ $(i=1,\ldots ,n)$ with $\sum\limits_{i=1}^{n}{{{w}_{i}}}=1$, where $\bar{a} :=\sum\limits_{i=1}^{n}{{{w}_{i}}{{a}_{i}}}$. Moreover, the function $F:\left[ 0,1 \right]\to \mathbb{R}$ defined by
\[F\left( t \right)=\sum\limits_{i=1}^{n}{{{w}_{i}}f\left( M+m-\left( \left( 1-t \right)\overline{a}+t{{a}_{i}} \right) \right)}.\]
	is monotonically nondecreasing and convex on $\left[ 0,1 \right]$.
\end{theorem}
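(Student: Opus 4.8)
The plan is to reduce the whole statement to two facts about $F$: that each of its summands is convex in $t$, and that $F$ is minimized at $t=0$. First I would pass to the reflected points $b_i := M+m-a_i$ and $\bar b := M+m-\bar a$; since $\sum_{i=1}^n w_i = 1$ one has $\bar b = \sum_{i=1}^n w_i b_i$, and since $a_i, \bar a \in [m,M]$ each of $b_i, \bar b$ again lies in $[m,M]$. The identity on which everything rests is
\[
M+m-\bigl((1-t)\bar a + t a_i\bigr) = (1-t)\bar b + t b_i ,
\]
so that $F(t) = \sum_{i=1}^n w_i f\bigl((1-t)\bar b + t b_i\bigr)$ is a weighted average of the values of $f$ along the segments from $\bar b$ to the points $b_i$.

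With this rewriting the convexity of $F$ is immediate: for each fixed $i$ the map $t \mapsto (1-t)\bar b + t b_i$ is affine, hence $t \mapsto f\bigl((1-t)\bar b + t b_i\bigr)$ is convex (a convex function precomposed with an affine map), and a combination of convex functions with nonnegative weights $w_i$ is again convex. For the lower bound I would apply the ordinary Jensen inequality \eqref{3} to the weights $w_i$ and the points $(1-t)\bar b + t b_i$, all of which lie in $[m,M]$; since $\sum_{i=1}^n w_i\bigl((1-t)\bar b + t b_i\bigr) = (1-t)\bar b + t\bar b = \bar b$, this yields $F(t) \ge f(\bar b) = f(M+m-\bar a)$ for all $t$. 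Because $F(0) = f(\bar b)$, this is exactly the assertion that $F$ attains its minimum on $[0,1]$ at the left endpoint.

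The step I expect to be the main obstacle is the monotonicity, since it does not follow from convexity alone but must be combined with the location of the minimum. The fact to establish is: a convex function on $[0,1]$ whose minimum occurs at $0$ is nondecreasing. I would prove this by a chord argument: for $0 < s < t \le 1$ write $s = \lambda t + (1-\lambda)\cdot 0$ with $\lambda = s/t \in (0,1)$, so convexity gives $F(s) \le \lambda F(t) + (1-\lambda)F(0)$; using $F(0) \le F(s)$ to replace $F(0)$ and rearranging leaves $\lambda F(s) \le \lambda F(t)$, whence $F(s) \le F(t)$.

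Finally the upper bound comes for free from the monotonicity just proved: $F(t) \le F(1) = \sum_{i=1}^n w_i f(b_i) = \sum_{i=1}^n w_i f(M+m-a_i)$, and applying Lemma \ref{1} to each term gives $F(1) \le f(M)+f(m)-\sum_{i=1}^n w_i f(a_i)$. Chaining $F(0) \le F(t) \le F(1)$ with these two endpoint evaluations reproduces \eqref{5} and completes the argument.
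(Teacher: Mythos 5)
Your proof is correct, and while its two pillars coincide with the paper's, the overall architecture is genuinely different. The lower bound (Jensen applied to the points $(1-t)\bar{b}+tb_i$, whose weighted mean is $\bar{b}=M+m-\bar{a}$) and the chord argument for monotonicity (writing $s=\frac{s}{t}\,t+\frac{t-s}{t}\cdot 0$ and using that the minimum sits at $0$) are essentially the paper's own steps, up to your cosmetic variant of absorbing $F(0)\le F(s)$ rather than $F(0)\le F(t)$. Where you diverge is twofold. First, for convexity of $F$ the paper verifies the midpoint inequality $F\left(\frac{t+s}{2}\right)\le\frac{F(t)+F(s)}{2}$ by direct computation, which strictly speaking yields only midpoint convexity and needs a continuity remark (omitted there) to upgrade to full convexity; your observation that each summand is $f$ precomposed with the affine map $t\mapsto(1-t)\bar{b}+tb_i$ gives convexity outright and is the cleaner route. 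Second, and more substantively, the paper proves the upper bound in \eqref{5} \emph{directly} and independently of the monotonicity claim: it splits $f\bigl((1-t)(M+m-\bar{a})+t(M+m-a_i)\bigr)\le(1-t)f(M+m-\bar{a})+tf(M+m-a_i)$ and then bounds the first term by Mercer's inequality \eqref{9} and the second by Lemma \ref{1}. You instead prove monotonicity first and obtain the upper bound as $F(t)\le F(1)=\sum_{i=1}^{n}w_i f(M+m-a_i)\le f(M)+f(m)-\sum_{i=1}^{n}w_i f(a_i)$, using only Lemma \ref{1}. This buys a unified reading of the whole theorem as the chain $F(0)\le F(t)\le F(1)\le f(M)+f(m)-\sum_{i=1}^{n}w_i f(a_i)$, avoids invoking Mercer's inequality as a separate input, and even records the slightly stronger intermediate bound $F(t)\le\sum_{i=1}^{n}w_i f(M+m-a_i)$; the price is that in your version the inequality \eqref{5} is logically downstream of the convexity and monotonicity analysis, whereas in the paper it stands on its own.
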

\begin{proof}
Firstly, we have
\begin{equation}\label{16}
\begin{aligned}
\sum\limits_{i=1}^{n}{{{w}_{i}}f\left( M+m-\left( \left( 1-t \right)\overline{a}+t{{a}_{i}} \right) \right)}&\ge f\left( \sum\limits_{i=1}^{n}{{{w}_{i}}\left( M+m-\left( \left( 1-t \right)\overline{a}+t{{a}_{i}} \right) \right)} \right) \\ 
& =f\left( M+m-\overline{a} \right).  
\end{aligned}
\end{equation}
On the other hand, 
\[\begin{aligned}
& \sum\limits_{i=1}^{n}{{{w}_{i}}f\left( M+m-\left( \left( 1-t \right)\overline{a}+t{{a}_{i}} \right) \right)} \\ 
& =\sum\limits_{i=1}^{n}{{{w}_{i}}f\left( \left( 1-t \right)\left( M+m-\overline{a} \right)+t\left( M+m-{{a}_{i}} \right) \right)} \\ 
& \le \sum\limits_{i=1}^{n}{{{w}_{i}}\left( \left( 1-t \right)f\left( M+m-\overline{a} \right)+tf\left( M+m-{{a}_{i}} \right) \right)} \\ 
& \le \sum\limits_{i=1}^{n}{{{w}_{i}}\left( \left( 1-t \right)\left( f\left( M \right)+f\left( m \right)-\sum\limits_{j=1}^{n}{{{w}_{j}}f\left( {{a}_{j}} \right)} \right)+t\left( f\left( M \right)+f\left( m \right)-f\left( {{a}_{i}} \right) \right) \right)} \\ 
& =f\left( M \right)+f\left( m \right)-\sum\limits_{i=1}^{n}{{{w}_{i}}f\left( {{a}_{i}} \right).} \\ 
\end{aligned}\]
For the convexity of $F$, we have
\[\begin{aligned}
& F\left( \frac{t+s}{2} \right) \\ 
& =\sum\limits_{i=1}^{n}{{{w}_{i}}f\left( M+m-\left( \left( 1-\frac{t+s}{2} \right)\overline{a}+\frac{t+s}{2}{{a}_{i}} \right) \right)} \\ 
& =\sum\limits_{i=1}^{n}{{{w}_{i}}f\left( M+m-\left( \frac{\left( 1-t \right)\overline{a}+t{{a}_{i}}+\left( 1-s \right)\overline{a}+s{{a}_{i}}}{2} \right) \right)} \\ 
& =\sum\limits_{i=1}^{n}{{{w}_{i}}f\left( \frac{M+m-\left( \left( 1-t \right)\overline{a}+t{{a}_{i}} \right)+M+m-\left( \left( 1-s \right)\overline{a}+s{{a}_{i}} \right)}{2} \right)} \\ 
& \le \frac{1}{2}\left[ \sum\limits_{i=1}^{n}{{{w}_{i}}f\left( M+m-\left( \left( 1-t \right)\overline{a}+t{{a}_{i}} \right) \right)}+\sum\limits_{i=1}^{n}{{{w}_{i}}f\left( M+m-\left( \left( 1-s \right)\overline{a}+s{{a}_{i}} \right) \right)} \right] \\ 
& =\frac{F\left( t \right)+F\left( s \right)}{2}. \\ 
\end{aligned}\]
Now, if $0<s<t<1$, then $s=\frac{t-s}{t}\cdot0+\frac{s}{t}\cdot t$ and hence the convexity of $F$ implies
	\[\begin{aligned}
	F\left( s \right)&=F\left( \frac{t-s}{t}\cdot0+\frac{s}{t}\cdot t \right) \\ 
	& \le \frac{t-s}{t}F\left( 0 \right)+\frac{s}{t}F\left( t \right) \\ 
	& \le \frac{t-s}{t}F\left( t \right)+\frac{s}{t}F\left( t \right) \\ 
	& =F\left( t \right).  
	\end{aligned}\]
We remark that the second inequality in the above follows from \eqref{16} and the fact
\[F\left( 0 \right)=\sum\limits_{i=1}^{n}{{{w}_{i}}f\left( M+m-\overline{a} \right)}=f\left( M+m-\overline{a} \right).\]
Therefore $F$ is monotonically nondecreasing on $\left[ 0,1 \right]$.
\end{proof}

\begin{corollary}\label{14}
Let all the assumptions of Theorem \ref{6} hold, then
\begin{equation*}
f\left( M+m-\overline{a} \right)\le \sum\limits_{i=1}^{n}{\frac{{{w}_{i}}}{\overline{a}-{{a}_{i}}}\int_{M+m-\overline{a}}^{M+m-{{a}_{i}}}{f\left( t \right)dt}}\le f\left( M \right)+f\left( m \right)-\sum\limits_{i=1}^{n}{{{w}_{i}}f\left( {{a}_{i}} \right)}.	
\end{equation*}
\end{corollary}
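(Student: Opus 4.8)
The plan is to produce the central quantity of the Corollary by integrating the function $F$ from Theorem \ref{6} over the parameter interval $\left[ 0,1 \right]$, and then to extract both inequalities directly from the pointwise bounds \eqref{5}. First I would use linearity of the integral to write
\[
\int_{0}^{1}{F\left( t \right)dt}=\sum\limits_{i=1}^{n}{{{w}_{i}}\int_{0}^{1}{f\left( M+m-\left( \left( 1-t \right)\overline{a}+t{{a}_{i}} \right) \right)dt}}.
\]

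Next, for each fixed index $i$ I would carry out the substitution $u=M+m-\left( \left( 1-t \right)\overline{a}+t{{a}_{i}} \right)=\left( M+m-\overline{a} \right)+t\left( \overline{a}-{{a}_{i}} \right)$, so that $du=\left( \overline{a}-{{a}_{i}} \right)dt$, while the endpoints $t=0$ and $t=1$ map to $u=M+m-\overline{a}$ and $u=M+m-{{a}_{i}}$ respectively. This converts each inner integral into
\[
\int_{0}^{1}{f\left( M+m-\left( \left( 1-t \right)\overline{a}+t{{a}_{i}} \right) \right)dt}=\frac{1}{\overline{a}-{{a}_{i}}}\int_{M+m-\overline{a}}^{M+m-{{a}_{i}}}{f\left( u \right)du},
\]
and summing against the weights ${{w}_{i}}$ shows that $\int_{0}^{1}{F\left( t \right)dt}$ equals precisely the middle expression appearing in the Corollary.

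Finally, I would invoke the pointwise estimate \eqref{5}, which holds for every $t\in\left[ 0,1 \right]$, namely $f\left( M+m-\overline{a} \right)\le F\left( t \right)\le f\left( M \right)+f\left( m \right)-\sum\limits_{i=1}^{n}{{{w}_{i}}f\left( {{a}_{i}} \right)}$. Integrating this chain over $\left[ 0,1 \right]$, where the two outer integrands are constant in $t$, yields
\[
f\left( M+m-\overline{a} \right)\le \int_{0}^{1}{F\left( t \right)dt}\le f\left( M \right)+f\left( m \right)-\sum\limits_{i=1}^{n}{{{w}_{i}}f\left( {{a}_{i}} \right)},
\]
which is exactly the claimed statement once the middle integral is identified as above. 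The only delicate point is the degenerate case ${{a}_{i}}=\overline{a}$, in which the factor $\tfrac{1}{\overline{a}-{{a}_{i}}}$ and the integral $\int_{M+m-\overline{a}}^{M+m-{{a}_{i}}}$ both collapse; there the $i$-th term is to be read in the limiting sense as $f\left( M+m-\overline{a} \right)$, consistent with the mean value of the continuous integrand over a shrinking interval, and the substitution remains valid as a removable-singularity identity. I expect this bookkeeping, rather than any genuine analytic difficulty, to be the only thing requiring care.
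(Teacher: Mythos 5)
Your proposal is correct and follows essentially the same route as the paper: integrate the chain \eqref{5} over $t\in\left[0,1\right]$ and identify $\int_{0}^{1}F\left(t\right)dt$ with the middle expression via the affine substitution $u=\left(M+m-\overline{a}\right)+t\left(\overline{a}-{{a}_{i}}\right)$, exactly as in the paper's proof of Corollary \ref{14}. Your explicit handling of the degenerate case ${{a}_{i}}=\overline{a}$, where the $i$-th term must be read in the limiting sense as $f\left(M+m-\overline{a}\right)$, is a point of care the paper silently omits.
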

\begin{proof}
Integrating the inequality \eqref{5} over $t\in \left[ 0,1 \right]$, we get \eqref{14}. Here we used the fact
\[\begin{aligned}
 \int_{0}^{1}{f\left( M+m-\left( \left( 1-t \right)\overline{a}+t{{a}_{i}} \right) \right)dt}&=\int_{0}^{1}{f\left( \left( 1-t \right)\left( M+m-\overline{a} \right)+t\left( M+m-{{a}_{i}} \right) \right)dt} \\ 
& =\int_{0}^{1}{f\left( t\left( M+m-\overline{a} \right)+\left( 1-t \right)\left( M+m-{{a}_{i}} \right) \right)dt} \\ 
& =\frac{1}{\overline{a}-{{a}_{i}}}\int_{M+m-\overline{a}}^{M+m-{{a}_{i}}}{f\left( t \right)dt}.  
\end{aligned}\]
\end{proof}

\begin{remark}
	Put $n=2$, ${{w}_{1}}={{w}_{2}}={1}/{2}\;$, ${{a}_{1}}=a$, and ${{a}_{2}}=b$ in Corollary \ref{14}, then
	\[\begin{aligned}
	f\left( M+m-\frac{a+b}{2} \right)& \le \frac{1}{b-a}\int_{a}^{b}{f\left( M+m-u \right)du} \\ 
	& \le f\left( M \right)+f\left( m \right)-\frac{f\left( a \right)+f\left( b \right)}{2}  
	\end{aligned}\] 
which shows that our inequality \eqref{14}  generalizes inequality \eqref{17}.
\end{remark}
We give a more precise estimate in the next theorem.
\begin{theorem}\label{15}
Let $f$ be a convex function on $\left[ m,M \right]$. Then
\begin{equation}\label{4}
\begin{aligned}
 f\left( M+m-\overline{a} \right)&\le \sum\limits_{i=1}^{n}{{{w}_{i}}f\left( M+m-\frac{\overline{a}+{{a}_{i}}}{2} \right)} \\ 
& \le \sum\limits_{i=1}^{n}{\frac{{{w}_{i}}}{\overline{a}-{{a}_{i}}}\int_{M+m-\overline{a}}^{M+m-{{a}_{i}}}{f\left( t \right)dt}} \\ 
& \le f\left( M \right)+f\left( m \right)-\sum\limits_{i=1}^{n}{{{w}_{i}}f\left( {{a}_{i}} \right)}  
\end{aligned}
\end{equation}
for all ${{a}_{i}}\in \left[ m,M \right]$ and ${{w}_{i}}\in \left[ 0,1 \right]$ $(i=1,\ldots ,n)$ with $\sum\limits_{i=1}^{n}{{{w}_{i}}}=1$.
\end{theorem}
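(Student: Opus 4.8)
The plan is to read the four-term chain as the two-sided bound of Corollary \ref{14} with an additional midpoint term wedged in the middle; the genuinely new content is then only the two leftmost inequalities. The key observation is that $M+m-\frac{\overline{a}+a_i}{2}$ is exactly the midpoint of the interval whose endpoints are $M+m-\overline{a}$ and $M+m-a_i$, which is precisely the configuration for a left Hermite--Hadamard comparison against the integral term $\frac{1}{\overline{a}-a_i}\int_{M+m-\overline{a}}^{M+m-a_i}f$.

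For the leftmost inequality I would invoke Jensen \eqref{3} directly. Since $\sum_{i=1}^n w_i a_i=\overline{a}$, the weighted average of the arguments collapses,
\[\sum_{i=1}^n w_i\Bigl(M+m-\tfrac{\overline{a}+a_i}{2}\Bigr)=M+m-\overline{a},\]
so convexity gives $\sum_{i=1}^n w_i f\bigl(M+m-\frac{\overline{a}+a_i}{2}\bigr)\ge f(M+m-\overline{a})$. For the middle inequality I would argue termwise. Using the same substitution as in the proof of Corollary \ref{14},
\[\frac{1}{\overline{a}-a_i}\int_{M+m-\overline{a}}^{M+m-a_i}f(t)\,dt=\int_0^1 f\bigl((1-s)(M+m-\overline{a})+s(M+m-a_i)\bigr)\,ds,\]
and the integral (continuous) form of Jensen's inequality bounds this average below by $f$ evaluated at the averaged argument, which is again $f\bigl(M+m-\frac{\overline{a}+a_i}{2}\bigr)$. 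Multiplying by $w_i\ge0$ and summing produces the middle inequality.

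The rightmost inequality is nothing but the right-hand estimate already established in Corollary \ref{14}, so it can simply be cited. I do not expect a serious obstacle here: the only point needing attention is the sign convention, since for $a_i>\overline{a}$ the factor $\overline{a}-a_i$ is negative and the integration limits are reversed; however, the change of variable above absorbs this automatically (the two orientation reversals cancel), so the displayed identity and the subsequent Jensen step are valid irrespective of the order of $a_i$ and $\overline{a}$, with the degenerate case $a_i=\overline{a}$ understood as the limit $f(M+m-\overline{a})$. The one genuine insight, rather than obstacle, is spotting the midpoint structure that unlocks the left Hermite--Hadamard estimate.
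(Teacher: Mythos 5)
Your proof is correct, and for the first two inequalities it coincides with the paper's own route: your leftmost step is exactly the paper's application of Jensen \eqref{3} to the collapsing average $\sum_{i=1}^{n} w_i \left( M+m-\frac{\overline{a}+a_i}{2} \right) = M+m-\overline{a}$ (the paper's inequality \eqref{11}), and your termwise left Hermite--Hadamard estimate is the same bound the paper derives by hand, by symmetrizing $f\left( \frac{a+b}{2} \right) \le \frac{f\left( ta+(1-t)b \right)+f\left( tb+(1-t)a \right)}{2}$ and integrating over $t\in[0,1]$ --- which is in effect a proof of the integral Jensen inequality you invoke. Where you genuinely diverge is the rightmost inequality: you cite Corollary \ref{14}, which is legitimate and non-circular since that corollary rests only on Theorem \ref{6}, whereas the paper re-derives it inside the proof by first pushing the integral averages up to the arithmetic-mean bound $\frac{f\left( M+m-\overline{a} \right)+\sum_{i=1}^{n} w_i f\left( M+m-a_i \right)}{2}$ (the upper Hermite--Hadamard half, inequality \eqref{2}) and then applying Mercer's inequality \eqref{9} together with Lemma \ref{1} termwise (inequality \eqref{12}). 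Your route is shorter and avoids duplicating work already done; the paper's route is self-contained and in passing yields a slightly finer chain, with the extra interpolating term $\frac{f\left( M+m-\overline{a} \right)+\sum_{i=1}^{n} w_i f\left( M+m-a_i \right)}{2}$ sitting between the integral sum and the Mercer bound (compare the final Remark). A small point in your favor: you explicitly address the orientation of $\frac{1}{\overline{a}-a_i}\int_{M+m-\overline{a}}^{M+m-a_i} f(t)\,dt$ when $a_i>\overline{a}$, and the degenerate case $a_i=\overline{a}$ (read as the limit $f\left( M+m-\overline{a} \right)$), both of which the paper leaves implicit.
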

\begin{proof}
If $f:\left[ m,M \right]\to \mathbb{R}$ is a convex function, then we have for any $a,b\in \left[ m,M \right]$
\[\begin{aligned}
 f\left( \frac{a+b}{2} \right)&=f\left( \frac{ta+\left( 1-t \right)b+tb+\left( 1-t \right)a}{2} \right) \\ 
& \le \frac{f\left( ta+\left( 1-t \right)b \right)+f\left( tb+\left( 1-t \right)a \right)}{2} \\ 
& \le \frac{f\left( a \right)+f\left( b \right)}{2}.  
\end{aligned}\] 
Replacing $a$ and $b$ by $M+m-a$ and $M+m-b$, respectively, we get
\[\begin{aligned}
 f\left( M+m-\frac{a+b}{2} \right)&\le \frac{f\left( M+m-\left( ta+\left( 1-t \right)b \right) \right)+f\left( M+m-\left( tb+\left( 1-t \right)a \right) \right)}{2} \\ 
& \le \frac{f\left( M+m-a \right)+f\left( M+m-b \right)}{2}.  
\end{aligned}\] 
Integrating the inequality over $t\in \left[ 0,1 \right]$, and using the fact
\[\int_{0}^{1}{f\left( tx+\left( 1-t \right)y \right)dt}=\int_{0}^{1}{f\left( ty+\left( 1-t \right)x \right)dt},\]
we infer that
\[\begin{aligned}
 f\left( M+m-\frac{a+b}{2} \right)&\le \int_{0}^{1}{f\left( M+m-\left( ta+\left( 1-t \right)b \right) \right)dt} \\ 
& \le \frac{f\left( M+m-a \right)+f\left( M+m-b \right)}{2}.  
\end{aligned}\]
Since ${{a}_{i}}, \bar{a} \in \left[ m,M \right]$, we can write
\[\begin{aligned}
 f\left( M+m-\frac{\overline{a}+{{a}_{i}}}{2} \right)&\le \frac{1}{\overline{a}-{{a}_{i}}}\int_{M+m-\overline{a}}^{M+m-{{a}_{i}}}{f\left( t \right)dt} \\ 
& \le \frac{f\left( M+m-\overline{a} \right)+f\left( M+m-{{a}_{i}} \right)}{2},  
\end{aligned}\]
due to
\[\int_{0}^{1}{f\left( M+m-\left( t\overline{a}+\left( 1-t \right){{a}_{i}} \right) \right)dt}=\frac{1}{\overline{a}-{{a}_{i}}}\int_{M+m-\overline{a}}^{M+m-{{a}_{i}}}{f\left( t \right)dt}.\]
Multiplying by ${{w}_{i}}>0$ $\left( i=1,\ldots ,n \right)$ and summing over $i$ from $1$ to $n$ we may deduce 
\begin{equation}\label{2}
\begin{aligned}
 \sum\limits_{i=1}^{n}{{{w}_{i}}f\left( M+m-\frac{\overline{a}+{{a}_{i}}}{2} \right)}&\le \sum\limits_{i=1}^{n}{\frac{{{w}_{i}}}{\overline{a}-{{a}_{i}}}\int_{M+m-\overline{a}}^{M+m-{{a}_{i}}}{f\left( t \right)dt}} \\ 
& \le \frac{f\left( M+m-\overline{a} \right)+\sum\nolimits_{i=1}^{n}{{{w}_{i}}f\left( M+m-{{a}_{i}} \right)}}{2}.  
\end{aligned}
\end{equation}
On the other hand, by \eqref{3}
\begin{equation}\label{11}
\begin{aligned}
 f\left( M+m-\bar{a} \right)&=f\left( \sum\limits_{i=1}^{n}{{{w}_{i}}\left( M+m-\frac{\bar{a}+{{a}_{i}}}{2} \right)} \right) \\ 
& \le \sum\limits_{i=1}^{n}{{{w}_{i}}f\left( M+m-\frac{\bar{a}+{{a}_{i}}}{2} \right)}  
\end{aligned}
\end{equation}
and by Lemma \ref{1}
\begin{equation}\label{12}
\begin{aligned}
& \frac{f\left( M+m-\bar{a} \right)+{\sum\nolimits_{i=1}^{n}{{{w}_{i}}f\left( M+m-{{a}_{i}} \right)}}}{2} \\ 
& \le \frac{f\left( M \right)+f\left( m \right)-\sum\nolimits_{j=1}^{n}{{{w}_{j}}f\left( {{a}_{j}} \right)}+{f\left( M \right)+f\left( m \right)-\sum\nolimits_{i=1}^{n}{{{w}_{i}}f\left( {{a}_{i}} \right)}}}{2} \\ 
& =f\left( M \right)+f\left( m \right)-\sum\limits_{i=1}^{n}{{{w}_{i}}f\left( {{a}_{i}} \right)}.
\end{aligned}
\end{equation}
Combining \eqref{2}, \eqref{11}, and \eqref{12}, we get \eqref{4}.
\end{proof}

\begin{corollary}\label{18}
Let ${{a}_{i}}\in \left[ m,M \right]$ and ${{w}_{i}}\in \left[ 0,1 \right]$ $(i=1,\ldots ,n)$ with $\sum\limits_{i=1}^{n}{{{w}_{i}}}=1$. Then
\[\begin{aligned}
 \frac{Mm}{\prod\limits_{i=1}^{n}{a_{i}^{{{w}_{i}}}}}&\le \exp \left[ \sum\limits_{i=1}^{n}{\frac{{{w}_{i}}}{\overline{a}-{{a}_{i}}}\int_{M+m-\overline{a}}^{M+m-{{a}_{i}}}{\log tdt}} \right] \\ 
& \le \prod\limits_{i=1}^{n}{{{\left( M+m-\frac{\overline{a}+{{a}_{i}}}{2} \right)}^{{{w}_{i}}}}} \\ 
& \le M+m-\sum\limits_{i=1}^{n}{{{w}_{i}}{{a}_{i}}}.  
\end{aligned}\]
\end{corollary}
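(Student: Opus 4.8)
The plan is to obtain Corollary \ref{18} as a direct specialization of Theorem \ref{15} to the convex function $f(t)=-\log t$, followed by a single exponentiation. Note first that $-\log$ is convex on $[m,M]$ precisely when $0<m\le M$; this positivity is implicit in the statement, since otherwise the logarithm, the powers $a_i^{w_i}$, and the geometric-mean expressions would not be defined. Under this hypothesis all four terms of the chain \eqref{4} become logarithmic quantities, and converting additive inequalities into multiplicative ones is exactly what $\exp$ accomplishes.

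Concretely, I would substitute $f=-\log$ into \eqref{4} and record the four evaluations. The two endpoints are the informative ones:
\[
f\left( M+m-\overline{a} \right)=-\log\left( M+m-\overline{a} \right),
\]
\[
f\left( M \right)+f\left( m \right)-\sum_{i=1}^{n}{w_i f\left( a_i \right)}=-\log\left( Mm \right)+\sum_{i=1}^{n}{w_i\log a_i}=-\log\frac{Mm}{\prod_{i=1}^{n}{a_i^{w_i}}},
\]
while the two interior terms become $-\sum_i w_i\log\left( M+m-\tfrac{\overline{a}+a_i}{2} \right)$ and $-\sum_i \tfrac{w_i}{\overline{a}-a_i}\int_{M+m-\overline{a}}^{M+m-a_i}\log t\,dt$ respectively. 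Thus \eqref{4} yields a four-term chain in which every entry carries an overall minus sign.

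Multiplying the entire chain by $-1$ reverses each inequality, and applying the strictly increasing (hence inequality-preserving) map $\exp$ then transforms each sum into the matching product via $\exp\!\left( \sum_i w_i\log x_i \right)=\prod_i x_i^{w_i}$. This sends the former least term to $M+m-\sum_i w_i a_i$ (now the greatest), the fourth evaluation above to $Mm/\prod_i a_i^{w_i}$ (now the least), and the two interior terms to $\prod_i\left( M+m-\tfrac{\overline{a}+a_i}{2} \right)^{w_i}$ and to $\exp\!\left[ \sum_i \tfrac{w_i}{\overline{a}-a_i}\int_{M+m-\overline{a}}^{M+m-a_i}\log t\,dt \right]$. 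Reading the resulting chain from the smallest to the largest term reproduces the assertion verbatim.

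The argument is entirely routine once Theorem \ref{15} is in hand; the only steps requiring attention are the sign bookkeeping, since returning from $f=-\log$ to $\log$ flips the direction of every inequality, and the implicit positivity $m>0$ that guarantees both convexity of $-\log$ on $[m,M]$ and the meaningfulness of all logarithms and fractional powers involved. I do not expect any genuine obstacle beyond this careful tracking of signs and monotonicity.
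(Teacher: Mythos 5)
Your proposal is correct and is exactly the paper's proof: the paper also obtains Corollary \ref{18} by putting $f(t)=-\log t$ into Theorem \ref{15} and exponentiating, merely stating this in one line where you spell out the sign reversal, the identity $\exp\left( \sum_i w_i \log x_i \right)=\prod_i x_i^{w_i}$, and the implicit positivity $m>0$. Your explicit bookkeeping is a faithful expansion of the same argument, not a different route.
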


\begin{proof}
Put $f(t)=-\log t$, $(0<t \le 1)$ in Theorem \ref{15}.
\end{proof}

\begin{remark}
	If we set $n=2$, $a_1=m,a_2=M$ and $w_1=w_2=1/2$ in Corollary \ref{18}, then we have
	$$
	\sqrt{Mm} \leq \frac{M^{\frac{M}{M-m}}}{e m^{\frac{m}{M-m}}} \leq \frac{1}{4}\sqrt{(M+3m)(m+3M)} \leq \frac{1}{2}\left(M+m\right).
	$$
One can  obtain the inequalities for the weighted parameter in means of two variables $m$ and $M$ by elementary calculations. We leave it to the interested readers. 
\end{remark}

\begin{remark}
Let all the assumptions of Theorem \ref{15} hold, then
\[\begin{aligned}
 f\left( \overline{a} \right)&\le \sum\limits_{i=1}^{n}{{{w}_{i}}f\left( \frac{\overline{a}+{{a}_{i}}}{2} \right)} \\ 
& \le \sum\limits_{i=1}^{n}{\frac{{{w}_{i}}}{{{a}_{i}}-\overline{a}} \int_{\overline{a}}^{{{a}_{i}}}{f\left( t \right)dt}} \\ 
& \le \frac{f\left( \overline{a} \right)+\sum\nolimits_{i=1}^{n}{{{w}_{i}}f\left( {{a}_{i}} \right)}}{2} \\ 
& \le \sum\limits_{i=1}^{n}{{{w}_{i}}f\left( {{a}_{i}} \right)}.  
\end{aligned}\]
The proof is in the same spirit as that of Theorem \ref{15} (see also \cite[Corollary 3]{dragomir}).
\end{remark}

\section*{Acknowledgements}
The author (S.F.) was partially supported by JSPS KAKENHI Grant Number 16K05257.

{\tiny \vskip 0.3 true cm }

{\tiny (H.R. Moradi) Department of Mathematics, Payame Noor University (PNU), P.O. Box 19395-4697, Tehran, Iran.}

{\tiny \textit{E-mail address:} hrmoradi@mshdiau.ac.ir }

{\tiny \vskip 0.3 true cm }

{\tiny (S. Furuichi) Department of Information Science, College of Humanities and Sciences, Nihon University, 3-25-40, Sakurajyousui, Setagaya-ku, Tokyo, 156-8550, Japan.}

{\tiny \textit{E-mail address:} furuichi@chs.nihon-u.ac.jp}
%-----------------------------------------------------------------------------
%-----------------------------------------------------------------------------
\end{document}